\newtheorem{theorem}{Theorem}[section]
\newtheorem{lemma}[theorem]{Lemma}
\newtheorem{corollary}[theorem]{Corollary}
\theoremstyle{definition}
\theoremstyle{remark}
\numberwithin{equation}{section}
\def\fnote#1{\footnote}
\def\ignora#1{}
\def\n3#1{\left\vert  \! \left\vert \! \left\vert \, #1 \, \right\vert \!
  \right\vert \! \right\vert }
\begin{document}

\title{ Subspaces of Banach spaces with big slices }

\author{Julio Becerra Guerrero, Gin{\'e}s L{\'o}pez-P{\'e}rez and Abraham Rueda Zoca}
\address{Universidad de Granada, Facultad de Ciencias.
Departamento de An\'{a}lisis Matem\'{a}tico, 18071-Granada
(Spain)} \email{juliobg@ugr.es, glopezp@ugr.es, arz0001@correo.ugr.es}

\begin{abstract} We study when diameter two properties  pass down to subspaces.  We obtain that the slice two property (respectively diameter two property, strong diameter two property) passes down from a Banach space  $X$ to a subspace $Y$ whenever $Y$ is complemented by a norm one projection with finite-dimensional kernel (respectively the quotient $X/Y$
is finite dimensional, $X/Y$ is strongly regular). Also we study the same problem for dual properties of the above ones, as having octahedral, weakly octahedral or 2-rough norm.

\end{abstract}

\maketitle\markboth{J. Becerra, G. L\'{o}pez and A. Rueda}{Subspaces of spaces with big slices.}

\section{Introduction.} We recall that a Banach space $X$ satisfies the strong diameter two property (SD2P), respectively diameter two property (D2P), slice-diameter two property (slice-D2P),  if every convex combination of slices, respectively every nonempty relatively weakly open subset , every slice,  in the unit ball of $X$ has diameter $2$. The  weak-star slice diameter two property      ($w^*$-slice-D2P), weak-star diameter property ($w^*$-D2P) and weak-star strong diameter two property ($w^*$-SD2P) for a dual Banach spaces are defined as usual, changing slices by $w^*$-slices and weak open subsets by $w^*$- open subsets in the unit ball. It is known that Daugavet property implies the SD2P and so the D2P and slice-D2P.  The above connection between the Daugavet properties and the diameter two properties was discovered in \cite{shv}. In fact, the dual of a Banach space with Dagavet property also satisfy the $w^*$-SD2P \cite{blr}. It is known that the above six properties are extremely different as it is proved in \cite{avd}.

As can be easily seen, the above properties are not inhered to subspaces, so  given a Banach space $X$ with some of the above properties, a natural question is wonder what subspaces $Y$ of $X$ have also the those property.  Up to our best knowledge, the same problem has not been yet considered for the diameter two properties. The aim of this note is  to study when the diameter two properties  pass down to subspaces. We show  that the above properties are not three space properties. We obtain  that a subspace $Y$ of a Banach space $X$ with the  SD2P, has the  SD2P whenever the quotient $X/Y$ is strongly regular, a weaker property than RNP. In particular, it is showed that  the above holds if $X/Y$ does not contain $\ell_1^n$ uniformly.  For the D2P, we get that a subspace $Y$ of a Banach space $X$ with D2P, has D2P whenever $X/Y$ is finite-dimensional and also we obtain that  a subspace $Y$ of a Banach space $X$ with slice-D2P, has slice-D2P if $Y$ is cofinite-dimensional and one complemented.  

Finally, we study the same problem for the dual properties of diameter two properties, like having octahedral, weakly octahedral or $2-$rough norm. Recall that a Banach space $X$ satisfies the SD2P if, and only if, the norm of $X^*$ is octahedral and the norm of $X$ is octahedral if, and only if, $X^*$ has $w^*$-SD2P (see \cite{blr}).

The norm of a Banach space $X$ is octahedral if for every $\varepsilon>0$ and for every finite-dimensional subspace $M$ of $X$ there is some $y$ in the unit sphere of $X$ such that $$\Vert x+\lambda y\Vert\geq (1-\varepsilon)(\Vert x\Vert +\vert \lambda\vert)$$ holds for every $x\in M$ and for every scalar $\lambda$ (see \cite{dgz}).

 Similarly, following \cite{hlp} and  \cite[Proposition I.1.11]{dgz}, a Banach space $X$ satisfies  D2P, respectively slice-D2P, if the norm of the dual space $X^*$ is weakly octahedral, respectively $2$-rough. Also,  the norm of $X$ is weakly octahedral, respectively  $2-$rough, if, and only if, $X^*$ has $w^*$-D2P, respectively $w^*$-slice-D2P. 

The norm of a Banach space $X$ is weakly octahedral (see \cite{hlp}) if for every
finite-dimensional subspace $Y$ of $X$, every $x^*\in B_{X^*}$,
and every $\varepsilon\in\mathbb R^+$ there exists $y\in S_X$
satisfying

$$\Vert x+y\Vert\geq (1-\varepsilon)(\vert x^*(x)\vert+\Vert y\Vert)\ \forall x\in Y.$$
The norm is said to be $2-$rough if, for every $u$ in the unit sphere of $X$ one has
$$\lim\sup_{\Vert h\Vert \to 0}\frac{\Vert u+h\Vert +\Vert u-h\Vert -2}{\Vert h\Vert}=2$$

Recall that a closed, bounded and convex subset $C$ of a  Banach space $X$ is said to be strongly regular if every closed and convex subset of $C$ has convex combinations of slices with diameter arbitrarily small, equivalently convex combinations of relatively weakly open subsets with diameter arbitrarily small, since it is known that every nonempty relatively weakly open subset contains a convex combination of slices \cite[Lemma 5.3]{bourgain}. We refer to this fact like the Bourgain lemma. A point $x$ of a such set $C$ of $X$ is said to be a point of strong regularity if there are convex combinations of slices in $C$ containing $x$ with arbitrarily small diameter. If $C$ is strongly regular then $C$ contains a norm dense subset of points of strong regularity \cite[Proposition 3.6]{ggms}. The strong regularity is a strictly weaker property than RNP and it is known that, for a Banach space $X$, $X^*$ is strongly regular if, and only if, $X$ does not contain isomorphic copies of $\ell_1$ \cite[Corollary 6.18]{ggms}. Also it is known that Banach spaces not containing $\ell_1^n$ uniformly are strongly regular \cite[Proposition 2.14]{chacho}. Banach spaces not containing $\ell_1^n$ uniformly are called too $K$-convex Banach spaces.

We consider real Banach spaces, $B_X$ (resp. $S_X$) denotes the closed unit ball (resp. sphere) of the Banach space $X$. All subspaces of a Banach space will be considered closed subspaces. If $Y$ is a subspace of a Banach space $X$, $X^*$ stands for the dual space of $X$ and the annihilator of $Y$ is the subspace of $X^*$ given by 
$$Y^{\circ}=\{x^*\in X^*:\ x^*(Y)=0\}.$$
A slice of a bounded subset $C$ of $X$ is the set $$S(C,f,\alpha):=\{x\in C:\ f( x)>M-\alpha\},$$ where $f\in X^*$, $f\neq 0$, $M=\sup_{x\in C}f(x)$ and $\alpha>0$. If $X=Y^*$ is a dual space for some Banach space $Y$ and $C$ is a bounded subset of $X$, a $w^*$-slice of $C$ is the set $$S(C,y,\alpha):=\{f\in C:f(y)>M-\alpha\},$$ where  $y\in Y$, $y\neq 0$, $M=\sup_{f\in C}f(y)$ and $\alpha>0$. $w$ (resp. $w^*$) denotes the weak (resp. weak-star) topology of a Banach space.

\bigskip

\section{Main results.}

\bigskip

 We begin with the following question: can a closed subspace satisfying any diameter two property force the space to have any diameter two property? We will see that, in general, this is false.

Indeed, it is possible construct an example $Z=X\times Y$ for $X=Y=C([0,1])$
and a norm $\Vert\cdot\Vert$ in $Z$ such that $Z$ fails 
 every diameter two property in spite of the fact that $X$ and $Y$ has the   strong diameter two property with the norm of $Z$ restricted to $X$ and $Y$
respectively. In order to show this norm we need the following

\begin{lemma}\label{dentings} Let $X$ be a Banach space and assume that $C=\overline{co}(B\cup\{x_0\})$
for some closed bounded and convex subset $B$ of $X$ and $x_0\in
X\setminus B$. Then $x_0$ is a denting point of $C$. Moreover, if 
$B$ is the unit ball of $X$, then $\overline{co}(C\cup\{-x_0\})$ is the unit ball
of some equivalent norm in $X$ and $x_0$ is a denting point of the
unit ball   for this new norm.\end{lemma}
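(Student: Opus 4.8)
The plan is to realise $x_0$ as the common peak of a family of slices whose diameters shrink to zero, and then to bootstrap this to the symmetrised body for the \emph{moreover} assertion.

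First, since $x_0\notin B$ and $B$ is closed, bounded and convex, I would invoke Hahn--Banach separation to obtain $f\in X^*$ with $f(x_0)>\sup_{b\in B}f(b)=:\alpha$. Setting $M:=\sup_{x\in C}f(x)$, the hypothesis $C=\overline{co}(B\cup\{x_0\})$ forces $M=\max\{\alpha,f(x_0)\}=f(x_0)$. I then record the two constants $\gamma:=M-\alpha>0$ and $R:=\sup_{b\in B}\|b-x_0\|$, the latter being finite because $B$ is bounded.

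The heart of the matter is the elementary observation that, since $B$ is convex, every element of $co(B\cup\{x_0\})$ can be written as $z=\lambda x_0+(1-\lambda)b$ with $\lambda\in[0,1]$ and $b\in B$. For such $z$ one computes $f(z)=\lambda M+(1-\lambda)f(b)\le M-(1-\lambda)\gamma$ and $\|z-x_0\|=(1-\lambda)\|b-x_0\|\le(1-\lambda)R$. Hence, if $z$ lies in the slice $S(C,f,\delta)$, the condition $f(z)>M-\delta$ gives $(1-\lambda)\gamma<\delta$, whence $\|z-x_0\|<R\delta/\gamma$. The main (mild) obstacle is that the slice sees all of $C$, not merely $co(B\cup\{x_0\})$; I would handle the passage to the closure either by showing that the set $\{\lambda x_0+(1-\lambda)b:\lambda\in[0,1],\,b\in B\}$ is already closed (distinguishing the cases $\lambda<1$ and $\lambda\to1$), or, more economically, by a density argument: any $z\in S(C,f,\delta)$ is a limit of points $z_n\in co(B\cup\{x_0\})$ with $f(z_n)\to f(z)>M-\delta$, so $z_n$ eventually satisfies the (open) slice inequality and the bound $\|z_n-x_0\|<R\delta/\gamma$ survives in the limit. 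Consequently $\operatorname{diam}S(C,f,\delta)\le 2R\delta/\gamma$, and since $x_0\in S(C,f,\delta)$ for every $\delta>0$, letting $\delta\to0$ produces slices of arbitrarily small diameter containing $x_0$; this is precisely the assertion that $x_0$ is a denting point of $C$.

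For the \emph{moreover} part, take $B=B_X$ and put $D:=\overline{co}(C\cup\{-x_0\})=\overline{co}(B_X\cup\{x_0\}\cup\{-x_0\})$. This set is closed, bounded, convex, symmetric, and contains $B_X$, so $0$ is an interior point; hence $D$ is the unit ball of the equivalent norm given by its Minkowski functional, the equivalence following from $B_X\subseteq D\subseteq RB_X$ for suitable $R$. To see that $x_0$ dents this new unit ball I would reapply the first part with the closed bounded convex set $B':=\overline{co}(B_X\cup\{-x_0\})$, since $D=\overline{co}(B'\cup\{x_0\})$. The only thing left to check is $x_0\notin B'$: normalising $\|f\|=1$, we have $\sup_{B_X}f=1<f(x_0)$ while $f(-x_0)=-f(x_0)<-1$, so $\sup_{B'}f=1<f(x_0)$ and $f$ separates $x_0$ from $B'$. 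The first part then yields slices of $D$ of arbitrarily small diameter (in the original, hence also in the equivalent, norm) containing $x_0$, and the separating functional shows $x_0$ is a support point, so $\|x_0\|_D=1$. Thus $x_0$ is a denting point of the unit ball for the new norm, completing the proof.
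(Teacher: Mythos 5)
Your proof is correct and, for the main assertion, follows the same route as the paper: separate $x_0$ from $B$ by a functional $f$, then show that any point of $co(B\cup\{x_0\})$ caught in a small slice determined by $f$ must place nearly all of its convex weight on $x_0$, hence lies norm-close to $x_0$. Two things distinguish your write-up, both to its advantage. First, you bound $\Vert z-x_0\Vert$ directly (getting $diam\, S(C,f,\delta)\leq 2R\delta/\gamma$) and you justify the passage from $co(B\cup\{x_0\})$ to $C=\overline{co}(B\cup\{x_0\})$ by an explicit density argument, using that approximants of a point of the slice eventually satisfy the strict slice inequality; the paper only estimates distances between pairs of points of $co(B\cup\{x_0\})\cap S$ and tacitly identifies the diameter of $S$ with the diameter of that dense subset. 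Second, for the \emph{moreover} part the paper merely asserts that the same slice $S$ works for $\overline{co}(C\cup\{-x_0\})$ once $\alpha$ is small enough, whereas you reduce to the already-proved first part by writing $\overline{co}(C\cup\{-x_0\})=\overline{co}(B'\cup\{x_0\})$ with $B'=\overline{co}(B_X\cup\{-x_0\})$ and verifying $x_0\notin B'$ via the normalized separating functional, since $\sup_{B'}f=1<f(x_0)$ because $f(-x_0)<-1$. That reduction, together with your check that the symmetrized set is closed, bounded, symmetric and has $0$ in its interior (so that it really is the unit ball of an equivalent norm), makes rigorous precisely the two claims the paper leaves as ``easy to see''; the underlying mathematics is the same, but your version is the more self-contained one.
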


\begin{proof} As $x_0\notin B$ we can find, by a separation
argument, $x^*\in S_{X^*}$ such that $x^*(x_0)>\sup x^*(B)=M$.
Hence $\sup\ x^*(C)= x^*(x_0)$. Fix $\varepsilon >0$, let $\beta:=\sup\limits_{x\in B}\Vert x\Vert$ and $0<\alpha
<\frac{x^*(x_0)-M}{2 (\beta+\Vert x_0\Vert)}\varepsilon$.
Consider $S=\{x\in C: x^*(x)>x^*(x_0)-\alpha\}$. Now $S$ is a
slice of $C$. Pick $y,\ z\in co(B\cup\{x_0\})\cap S$ with
$y=\lambda b_1+(1-\lambda)x_0$ and $z=\mu b_2+(1-\mu)x_0$, for
some $0\leq \lambda,\ \mu\leq 1$ and $b_1,\ b_2\in B$. As $y,\
z\in S$ we deduce that $\lambda,\ \mu<
\frac{\varepsilon}{2(\beta +\Vert x_0\Vert)}$. Hence $\Vert y-z\Vert<
\frac{\varepsilon}{2(\beta+\Vert x_0\Vert)}(\Vert b_1\Vert +\Vert b_2\Vert +\Vert x_0\Vert)<\varepsilon$. This proves that $co(B\cup\{x_0\})\cap S$
has diameter less than $\varepsilon$ and so $S$ is a slice of $C$
containing $x_0$ with diameter less than $\varepsilon$ and $x_0$
is a denting point of $C$.

In the case that $B$ is in particular the unit ball of $X$, then it is
easy to see that the above set $S$ is a slice of $\overline{co}(C\cup\{-x_0\})$
containing $x_0$ with diameter less than $\varepsilon$ for
$\alpha$ enough  small.\end{proof}

In order to exhibit the announced example, let $B$ be the closed unit ball of
 $C([0,1])\oplus_{\infty}C([0,1])$ and
 $C=\overline{co}(B\cup\{(x_0,0)\}\cup\{(-x_0,0)\})$, where $x_0$ is a
 point in $C([0,1])$ whose usual norm in $C([0,1])$ is $2$. From lemma
 \ref{dentings}, $C$ is the unit ball of some norm in $C([0,1])\times
 C([0,1])$ failing every diameter two property whose restriction to the
 factor spaces has the SD2P.

Recall that a property ($\mathcal P$) is said to be a tree space property if a Banach space $X$ satisfies ($\mathcal P$) whenever there exists $Y\subseteq X$ a closed subspace such that $Y$ and $X/Y$ enjoy the property ($\mathcal P$). 
  
 As a consequence of the previous lemma, the   diameter two properties   are not tree space properties.

We study now the following question: given a Banach space $X$ satisfying some diameter two property, which closed subspaces of $X$ enjoy any diameter two property? The following result answer the above question assuming natural properties to the quotient $X/Y$.

\begin{theorem}\label{teocociented2p}

Let $X$ be a Banach space and let $Y$ a  subspace of $X$.

\begin{enumerate}

\item[i)] If $X$ has the slice diameter two property  and there exists $\pi:X\longrightarrow Y$ a norm-one  linear and continuous projection onto a subspace $Y$ of $X$ such that $ker(\pi)$ is finite-dimensional, then $Y$ has the slice diameter two property. 

\item[ii)] If $X$ has the diameter two property and $X/Y$ is finite-dimensional, then $Y$ has the diameter two property.

\item[iii)] If $X$ has the strong diameter two property and $X/Y$ is strongly regular then $Y$ has the strong diameter two property.

 \end{enumerate}

\end{theorem}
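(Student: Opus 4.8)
The three parts share a common skeleton. Given a slice, a relatively weakly open set, or a convex combination of slices of $B_Y$, one extends the functionals that define it to $X$, invokes the corresponding diameter two property of $X$ to produce an almost diametral configuration in $B_X$, and finally transports that configuration back into $Y$. The whole difficulty, and the reason the three hypotheses on $X/Y$ differ, is concentrated in this last transport step: the configuration obtained in $B_X$ need not lie close to $Y$, and the device used to correct this is precisely what the hypothesis on $X/Y$ (or on $\ker\pi$) provides.

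\emph{Part (ii).} Let $W=\{y\in B_Y:\ \lvert g_i(y)-g_i(y_0)\rvert<\varepsilon,\ 1\le i\le n\}$ be a nonempty relatively weakly open subset of $B_Y$, with $g_i\in Y^*$ and $y_0\in B_Y$. Since $X/Y$ is finite dimensional, $Y=\bigcap_{j=1}^k\ker\psi_j$ for a basis $\psi_1,\dots,\psi_k$ of $Y^\circ=(X/Y)^*$. Extend each $g_i$ to $\tilde g_i\in X^*$ and consider
$$W'=\{x\in B_X:\ \lvert \tilde g_i(x)-\tilde g_i(y_0)\rvert<\varepsilon/2,\ \lvert\psi_j(x)\rvert<\delta\},$$
a nonempty (it contains $y_0$) relatively weakly open subset of $B_X$. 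By the D2P of $X$ it has diameter $2$, so we may pick $u,v\in W'$ with $\|u-v\|>2-\varepsilon'$. The key point is that on the finite dimensional space $X/Y$ the quotient norm is equivalent to $z\mapsto\max_j\lvert\psi_j(z)\rvert$, so $\lvert\psi_j(u)\rvert<\delta$ forces $\|q(u)\|=d(u,Y)<C\delta$, where $q:X\to X/Y$ is the quotient map; likewise for $v$. Choosing $y_u,y_v\in Y$ with $\|u-y_u\|,\|v-y_v\|<C\delta$ and renormalising so that they lie in $B_Y$, one checks using $\tilde g_i|_Y=g_i$ that the renormalised $y_u,y_v$ still satisfy the constraints defining $W$, while $\|y_u-y_v\|>2-\varepsilon'-2C\delta$. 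Taking $\delta,\varepsilon'$ small gives two points of $W$ at distance $>2-\varepsilon$, so $\operatorname{diam}W=2$.

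\emph{Part (iii).} The plan is to mimic (ii), replacing the finite dimensional control of the quotient by strong regularity. Given a convex combination of slices $\sum_i\lambda_i S(B_Y,g_i,\alpha_i)$ and $\varepsilon>0$, extend each $g_i$ to $f_i\in S_{X^*}$ and put $S_i=S(B_X,f_i,\alpha_i)$, so that $S_i\cap Y=S(B_Y,g_i,\alpha_i)$. For each $i$ the set $\Phi_i=\overline{q(S_i)}$ is a closed convex subset of $B_{X/Y}$ which contains $0$ (because $S_i\cap Y\neq\emptyset$ and these points map to $0$). Since $X/Y$ is strongly regular, $\Phi_i$ is strongly regular and its points of strong regularity are dense; hence we may fix a point of strong regularity $p_i\in\Phi_i$ with $\|p_i\|<\delta$ and a convex combination of slices $V_i$ of $\Phi_i$ with $p_i\in V_i$ and $\operatorname{diam}V_i<\delta$, whence $V_i\subseteq\{\,\|z\|<2\delta\,\}$. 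Pulling the slices of $V_i$ back through $q$ and intersecting them with $S_i$ produces, for each $i$, finitely many relatively weakly open subsets of $B_X$ (finite intersections of slices); assembling them with the weights $\lambda_i$ and the weights of $V_i$ yields a convex combination of relatively weakly open subsets of $B_X$. By Bourgain's lemma such a set contains a convex combination of slices, so the SD2P of $X$ forces it to have diameter $2$. The resulting almost diametral pair has, for each $i$, an aggregated component $\bar u_i\in S_i$ (a convex combination of the chosen points) with $q(\bar u_i)\in V_i$, hence $d(\bar u_i,Y)<2\delta$. Correcting each $\bar u_i$ into $S(B_Y,g_i,\alpha_i)$ exactly as in (ii) (using a little slack in $\alpha_i$ and renormalising) gives the desired almost diametral pair for $\sum_i\lambda_i S(B_Y,g_i,\alpha_i)$.

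\emph{Part (i).} Here I would extend $g\in S_{Y^*}$ by $f=g\circ\pi$; since $\pi$ is a norm one projection, $\|f\|=1$, $\pi(B_X)=B_Y$, and one checks that $S(B_Y,g,\alpha)=\pi\big(S(B_X,f,\alpha)\big)=S(B_X,f,\alpha)\cap Y$. Thus, writing $N=\ker\pi$, the statement reduces to showing that the norm one projection $\pi$ does not decrease the diameter of the slice $S(B_X,f,\alpha)$, which by the slice\mbox{-}D2P of $X$ equals $2$; equivalently, that slice\mbox{-}D2P is inherited by the quotient $X/N\cong Y$.

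\emph{Main obstacle.} In (ii) and (iii) the transport step is genuinely the heart of the matter, but it is handled: in (ii) by the equivalence of norms on the finite dimensional quotient, and in (iii) by strong regularity together with Bourgain's lemma (the latter being what lets one use finite intersections of slices, which SD2P controls only \emph{via} relatively weakly open sets). The point needing most care in (iii) is that one must place the small diameter convex combination of slices $V_i$ near $0$, not merely make it small, which is why density of the points of strong regularity near $0\in\Phi_i$ is used. The most delicate part is (i): since slice\mbox{-}D2P controls only single slices, one cannot add auxiliary constraints as in (ii)--(iii), and a near diametral pair $u,v$ of $S(B_X,f,\alpha)$ may differ essentially by an element of the kernel, so that $\|\pi u-\pi v\|$ collapses. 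Overcoming this is exactly where both hypotheses are essential: finite dimensionality of $N$ makes the kernel components of a near diametral sequence relatively compact, and the minimal norm property $\|\pi x\|=d(x,N)$ of the norm one projection must be used to align those components and conclude that the diameter survives the projection. I expect this alignment to be the crux of the argument.
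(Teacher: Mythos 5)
Your parts (ii) and (iii) are essentially correct and follow the same route as the paper: extend the defining functionals by Hahn--Banach, add a weakly open constraint forcing proximity to $Y$, get an almost diametral pair in $B_X$ from the diameter two property of $X$, and transport it back into $B_Y$. In (ii) you implement the proximity constraint with a basis of $Y^\circ$ and the equivalence of norms on the finite-dimensional quotient, whereas the paper uses the $w$-$w$ openness of the quotient map and a small ball centered at $0$ in $X/Y$; these are the same idea in different clothing. In (iii) both you and the paper use density of points of strong regularity in the closure of the image of each slice, pull back the small convex combination of slices through the quotient map, intersect with the original slices, and invoke Bourgain's lemma together with the SD2P of $X$; your explicit renormalisation of the approximating points into $B_Y$ even addresses a step the paper glosses over.

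Part (i), however, is not proved. You correctly reduce it (via $f=g\circ\pi$, $\pi(B_X)=B_Y$) to the assertion that a norm-one projection with finite-dimensional kernel cannot decrease the infimum of slice diameters of the unit ball, you correctly diagnose the obstruction --- a near-diametral pair $u,v$ in $S(B_X,f,\alpha)$ may differ essentially by an element of $\ker\pi$, so that $\lVert \pi u-\pi v\rVert$ collapses --- and then you stop, saying you ``expect this alignment to be the crux of the argument.'' That alignment \emph{is} the entire content of part (i); identifying it is not supplying it. The paper does not carry it out from scratch either: after noting $\pi(B_X)=B_Y$, it quotes Theorem 5.3 of Schachermayer, Sersouri and Werner (the reference on moduli of nondentability), which states precisely that $2=\inf\{\mathrm{diam}(S):\ S\ \text{slice of}\ B_X\}\leq\inf\{\mathrm{diam}(T):\ T\ \text{slice of}\ \pi(B_X)\}$ when $\lVert\pi\rVert=1$ and $\ker\pi$ is finite-dimensional. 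So your proposal for (i) stands as a correct reduction plus an accurate description of the missing lemma (compactness of the ball of the kernel combined with a perturbation of the slicing functional to pin down the kernel components), but that lemma is neither proved nor cited, and without it part (i) is a genuine gap.
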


\begin{proof}

{\bf i)} Note  that, since $\Vert \pi\Vert =1$, 

$$\pi(B_X)=B_Y.$$

Under the hypotheses of i) is proved in  \cite[Theorem 5.3]{scsewe}  that

$$ 2=\inf \{ diam(S)\ /\ S\mbox{ slice of  }B_X \}\leq \inf \{diam(T)\ /\ T\mbox{ slice of } \pi(B_X)=B_Y\}. $$

Thus $Y$ has the slice diameter two property, as desired.

{\bf ii)} Consider

$$W:=\{y\in Y\ /\ \vert y_i^*(y-y_0)\vert<\varepsilon_i\ \forall i\in\{1,\ldots, n\}\}, $$

for $n\in\mathbb N, \varepsilon_i\in\mathbb R^+, y_i^*\in Y^*$ for each $i\in\{1,\ldots,n\}$ and $y_0\in Y$ such that

$$W\cap B_Y\neq \emptyset.$$

Let's prove that $W\cap B_Y$ has diameter 2. To this aim pick an arbitrary  $\delta\in\mathbb R^+$.

Assume that $y_i^*\in X^*$ for each  $i\in\{1,\ldots, n\}$. We have no loss of generality by Hahn-Banach's theorem.

Define

$$U:=\{x\in X\ /\ \vert y_i^*(x-y_0)\vert<\varepsilon_i\ \forall i\in\{1,\ldots, n\}\},$$

which is a weakly open set in $X$ such that $U\cap B_X\neq \emptyset$.

Let $p:X\longrightarrow X/Y$ be the quotient map, which is a $w-w$ open map. Then  $p(U)$ is a weakly open set in  $X/Y$. In addition

$$\emptyset\neq p(U\cap B_X)\subseteq p(U)\cap p(B_X)=p(U)\cap B_{X/Y}.$$

Defining $A:=p(U)\cap B_{X/Y}$, then $A$ is a non-empty ,relatively weakly open and convex subset of $B_{X/Y}$ which contains to zero.   Hence, as $X/Y$ is finite-dimensional, we can find a weakly open set $V$ of $X/Y$, in fact a ball centered at $0$, such that $V\subset A$ and that

\begin{equation}\label{diamabicocie}
diam(V\cap p(U)\cap B_{X/Y})= diam(V)<\frac{\delta}{8}.
\end{equation}

As $V\subset A$ then $B:=p^{-1}(V)\cap U\cap B_X\neq \emptyset$. Hence $B$ is a non-empty relatively weakly open subset of $B_X$. Using that $X$ satisfies the diameter two property we can assure the existence of  $x,y\in B$ such that

\begin{equation}\label{estimabiespad2p}
\Vert x-y\Vert>2-\frac{\delta}{8}.
\end{equation}

Note that  $x\in B$ implies $p(x)\in V= V\cap P(U)\cap B_{X/Y}$. In view of  (\ref{diamabicocie}) it follows

$$\Vert p(x)\Vert\leq  diam(V\cap p(U)\cap B_{X/Y})<\frac{\delta}{8}.$$

Hence there exists $u\in B_Y$ such that

\begin{equation}\label{elemenposd2p}
\Vert x-u\Vert<\frac{\delta}{8}.
\end{equation}

Again using (\ref{diamabicocie}), by a similar argument we can find $v\in B_Y$ satisfying

\begin{equation}\label{elemenegd2p}
\Vert v-y\Vert<\frac{\delta}{8}.
\end{equation}

Note that given $i\in\{1,\ldots, n\}$, keeping in mind (\ref{elemenposd2p}), one has

$$\vert y_i^*(u-y_0)\vert\leq \vert y_i^*(y-x)\vert+\vert y_i^*(x-y_0)\vert\leq \Vert y_i^*\Vert\frac{\delta}{8}+\varepsilon_i,$$

using that $x\in U$. Thus, if we define

$$W_\delta:=\left\{y\in Y\ /\ \vert y_i^*(y-y_0)\vert<\varepsilon_i+\Vert y_i^*\Vert\frac{\delta}{8}\ \forall i\in\{1,\ldots, n\}\right\}$$

it follows that $u,v\in W_\delta\cap B_Y$. On the other hand, in view of  (\ref{estimabiespad2p}),(\ref{elemenposd2p}) and (\ref{elemenegd2p}) we can estimate

\begin{equation}\label{estimafinald2p}
diam(W_\delta\cap B_Y)\geq \Vert u-v\Vert\geq \Vert x-y\Vert-\Vert x-u\Vert-\Vert y-v\Vert\geq 2-\frac{\delta}{8}-\frac{\delta}{8}-\frac{\delta}{8}>2-\delta.
\end{equation}

As $\delta\in\mathbb R^+$ was arbitrary we deduce that $diam(W\cap B_Y)=2$, as wanted.

{\bf iii)} Assume that $X$ has the strong diameter two property and that $X/Y$ is strongly regular.

Let $C:=\sum_{i=1}^n \lambda_i S(B_Y,y_i^*,\varepsilon)=\sum_{i=1}^n \lambda_i S_i$ a convex combination of slices of $B_Y$. Let prove that $diam(C)=2$. To this aim pick an arbitrary $\delta\in\mathbb R^+$.

Let $\pi:X\longrightarrow X/Y$ the quotient map. Assume that $y_i^*\in S_{X^*}$. We have no loss of generality because of Hahn-Banach's theorem. 

For each $i\in\{1,\ldots, n\}$ consider $A_i:=\pi(S(B_X,y_i^*,\varepsilon))$, which is a convex subset of $B_{X/Y}$ containing to zero. By \cite[Proposition III.6]{ggms} then $\overline{A_i}$ is equal to the closure of the set of its strongly regular  points. As a consequence, for each  $i\in\{1,\ldots, n\}$, there exists $a_i$ a strongly regular point of $\overline{A_i}$ such that

\begin{equation}\label{normafuertexp}
\left\Vert a_i\right\Vert<\frac{\delta}{16}.
\end{equation}

For every $i\in\{1,\ldots,n\}$ we can find $n_i\in\mathbb N, \mu_1^i,\ldots, \mu_{n_i}^i\in ]0,1]$ such that $\sum_{j=1}^{n_i} \mu_j^i=1$ and $(a_1^i)^*,\ldots, (a_{n_i}^i)^*\in S_{(X/Y)^*},\eta_j^i\in\mathbb R^+$ satisfying that 
$$\sum_{j=1}^{n_i}\mu_j^i (S(B_{X/Y},(a_j^i)^*,\eta_j^i)\cap \overline{A_i})$$ and also satisfying 

\begin{equation}\label{diaslicescociente}
diam\left(\sum_{j=1}^{n_i} \mu_j^i (S(B_{X/Y},(a_j^i)^*,\eta_j^i)\cap \overline{A_i})\right)<\frac{\delta}{16}.
\end{equation}

It is clear that for $i\in\{1,\ldots, n\}$ and $j\in\{1,\ldots, n_i\}$ one has 

$$S(B_{X/Y},(a_j^i)^*,\eta_j^i)\cap A_i\neq \emptyset\Rightarrow S(B_X,\pi^*((a_j^i)^*),\eta_j^i)\cap S(B_X,y_i^*,\varepsilon)\neq \emptyset.$$

Now $\sum_{i=1}^n \lambda_i \sum_{j=1}^{n_i}\mu_j^i (S(B_X,\pi^*((a_j^i)^*),\eta_i)\cap S(B_X,y_i^*,\varepsilon))$ is a convex combination of non-empty relatively weakly open subsets of $B_X$. By a known lemma of Bourgain, last set contains a convex combination of slices of $B_X$ and, as a consequence, last set has diameter two. Hence we can find, for each $i\in\{1,\ldots, n\}, j\in\{1,\ldots, n_i\}$, elements $x_j^i,z_j^i\in S(B_X,\pi^*((a_j^i)^*),\eta_i)\cap S(B_X,y_i^*,\varepsilon)$ verifying

\begin{equation}\label{estinormaespacio}
\left\Vert \sum_{i=1}^n \lambda_i \sum_{j=1}^{n_i}\mu_j^i x_j^i-\sum_{i=1}^n \lambda_i \sum_{j=1}^{n_i}\mu_j^i z_j^i\right\Vert>2-\frac{\delta}{16}.
\end{equation}

By the one hand, given $i\in\{1,\ldots,n\}$ one has

$$\sum_{j=1}^{n_i} \mu_j^i x_j^i\in \sum_{j=1}^{n_i} \mu_j^i S(B_X,\pi^*((a_j^i)^*),\eta_j^i)\cap S(B_X,y_i^*,\varepsilon)\Rightarrow$$

$$\Rightarrow  \pi\left(\sum_{j=1}^{n_i}\mu_j^i x_j^i\right)\in \sum_{j=1}^{n_i}\mu_j^i S(S_{X/Y},(a_j^i)^*,\eta_j^i)\cap A_i,$$

thus, since (\ref{diaslicescociente}), we have

\begin{equation}\label{normaelemento}
\left\Vert \pi\left(\sum_{j=1}^{n_i}\mu_j^i x_j^i\right)\right\Vert\leq \left\Vert \sum_{j=1}^{n_i}\mu_j^i  a_i\right\Vert+
\end{equation}
\begin{equation}
diam\left(\sum_{j=1}^{n_i} \mu_j^i (S(S_{X/Y},(a_j^i)^*,\eta_j^i)
\cap A_i)\right)<\frac{\delta}{8}.
\end{equation}

Hence, for each $i\in\{1,\ldots, n\}$, there exists $a_i\in B_Y$ such that

\begin{equation}\label{elementopositivo}
\left\Vert a_i-\sum_{j=1}^{n_i}\mu_j^ix_j^i\right
\Vert<\frac{\delta}{8}.
\end{equation}

By a similar argument we can find, for every $i\in\{1,\ldots, n\}$, an element  $b_i\in B_Y$ verifying

\begin{equation}\label{elementonegativo}
\left\Vert b_i-\sum_{j=1}^{n_i}\mu_j^iz_j^i\right
\Vert<\frac{\delta}{8}.
\end{equation}

Thus given $i\in\{1,\ldots, n\}$ we deduce, in view of (\ref{elementopositivo}),

$$y_i^*(a_i)=y_i^*\left(
\sum_{j=1}^{n_i}\mu_j^ix_j^i\right)+
y_i^*\left(a_i
-\sum_{j=1}^{n_i}\mu_j^ix_j^i\right)>
1-\varepsilon-\frac{\delta}{8}$$.

In a similar way, using (\ref{elementonegativo}), we deduce that

$$y_i^*(b_i)>1-\varepsilon-\frac{\delta}{8}.$$

Summarizing,

\begin{equation}\label{nuevoslices}
a_i,b_i\in S\left(B_Y,y_i^*,\varepsilon+\frac{\delta}{8}\right).
\end{equation}

On the other hand, in view of (\ref{estinormaespacio}), we deduce:

$$
diam\left(\sum_{i=1}^n \lambda_i S\left(B_Y,y_i^*,\varepsilon+\frac{\delta}{2}\right)\right)\geq \left\Vert \sum_{i=1}^n \lambda_i a_i-\sum_{i=1}^n \lambda_i b_i\right\Vert\geq$$

$$ \geq \left\Vert \sum_{i=1}^n \lambda_i \sum_{j=1}^{n_i}\mu_j^i x_j^i-\sum_{i=1}^n \lambda_i \sum_{j=1}^{n_i}\mu_j^i z_j^i\right\Vert-
\left\Vert a_i-\sum_{j=1}^{n_i}\mu_j^ix_j^i\right
\Vert-\left\Vert b_i- \sum_{j=1}^{n_i}\mu_j^iz_j^i\right
\Vert>$$ $$2-\delta.$$

As $\delta\in\mathbb R^+$ was arbitrary we conclude that $diam(C)=2$, as desired.\end{proof}

We don't know if $Y$ has slice-D2P (D2P) whenever $X$ has slice-D2P (D2P) and $X/Y$ is reflexive or RNP. We don't know even if the part i) of above theorem holds without assuming the norm one projection condition. Note that in this case $Y$ is finite-codimensional.

\begin{corollary}\label{segundo}  Let $X$ be a Banach space such that $X^*$ has   SD2P  and let $Y$ a subspace of  $X$.
If $Y$ does not contain any copy of $\ell_1$ then $Y^{\circ}$ has   SD2P. \end{corollary}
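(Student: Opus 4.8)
The plan is to realise $Y^{\circ}$ as a subspace of $X^*$ whose associated quotient is strongly regular, so that part iii) of Theorem \ref{teocociented2p} applies directly. First I would recall the standard isometric identification $X^*/Y^{\circ}\cong Y^*$: the restriction map $R\colon X^*\longrightarrow Y^*$ given by $R(x^*)=x^*|_Y$ is a norm-one surjection (surjectivity being a consequence of the Hahn--Banach theorem) whose kernel is exactly $Y^{\circ}$, so the induced map $X^*/Y^{\circ}\longrightarrow Y^*$ is an isometric isomorphism.

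Next, since $Y$ contains no isomorphic copy of $\ell_1$, the characterisation recalled in the introduction, namely \cite[Corollary 6.18]{ggms}, gives that $Y^*$ is strongly regular. As strong regularity is a property of the unit ball expressed through diameters of convex combinations of slices, it is preserved under isometric isomorphism; hence the quotient $X^*/Y^{\circ}$ is strongly regular as well.

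Finally, I would invoke Theorem \ref{teocociented2p} iii) with the ambient space $X^*$ playing the role of $X$ and the subspace $Y^{\circ}$ playing the role of $Y$. The hypothesis that $X^*$ has the SD2P holds by assumption, and we have just verified that the quotient $X^*/Y^{\circ}$ is strongly regular, so the theorem yields that $Y^{\circ}$ has the SD2P, which is the desired conclusion.

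Every step beyond the duality identification is a direct invocation of results already available, so there is no genuine obstacle. The only point that requires care is selecting the \emph{correct} half of the annihilator duality: one must use $X^*/Y^{\circ}\cong Y^*$ rather than $Y^{\circ}\cong (X/Y)^*$, precisely because it is the quotient $X^*/Y^{\circ}$ that enters the hypothesis of Theorem \ref{teocociented2p} iii), and it is the strong regularity of $Y^*$ that is governed by the $\ell_1$-free assumption on $Y$.
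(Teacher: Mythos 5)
Your proposal is correct and follows exactly the paper's own argument: identify $X^*/Y^{\circ}$ with $Y^*$, use \cite[Corollary 6.18]{ggms} to get strong regularity of $Y^*$ from the absence of $\ell_1$ in $Y$, and then apply Theorem \ref{teocociented2p} iii) with $X^*$ as the ambient space and $Y^{\circ}$ as the subspace. The only difference is that you spell out the duality identification and the invariance of strong regularity under isometric isomorphism, which the paper leaves implicit.
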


\begin{proof}
Assume that $Y$ does not contain any isomorphic copy of $\ell_1$. Then $Y^*=X^*/Y^\circ$ is strongly regular \cite{ggms}. By Theorem \ref{teocociented2p}, part iii), we deduce that $Y^\circ=(X/Y)^*$ has the SD2P. \end{proof}

Note that, taking into account the duality between SD2P and octahedrality, we deduce from the  corollary \ref{segundo} that the norm of $(X/Y)^{**}=X^{**}/Y^{\circ \circ}$ is octahedral whenever the norm of $X^{**}$ is octahedral and $Y$ does not contain isomorphic copies of $\ell_1$.

\begin{corollary} Let $X$ be a Banach space  the SD2P and $Y$ a subspace of $X$. If $Y^{\circ}$ does not contain isomorphic copies of $\ell_1$, then $Y$ satisfies the   SD2P.\end{corollary}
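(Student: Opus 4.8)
The plan is to reduce this corollary to Corollary~\ref{segundo} by passing to an appropriate dual setting. The final statement asserts that if $X$ has the SD2P and $Y^{\circ}$ does not contain isomorphic copies of $\ell_1$, then $Y$ satisfies the SD2P. The key observation is that the SD2P of $X$ is, by the duality recalled in the introduction, equivalent to the octahedrality of the norm of $X^*$; and more directly, we should think of $Y$ as a quotient-type object in the biduality picture. Concretely, note that $Y^{\circ}$ is a subspace of $X^*$, and its annihilator in $X^{**}$ is $Y^{\circ \circ} = Y^{**}$ (under the canonical embedding). So the natural move is to apply the previous corollary with the roles reshuffled: take the dual space $X^*$ in place of the ambient space and $Y^{\circ}$ in place of the subspace.

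First I would make precise the hypothesis transfer. Since $X$ has the SD2P, the norm of $X^*$ is octahedral, equivalently $X^{**}$ has the $w^*$-SD2P; but for the purposes of invoking Corollary~\ref{segundo} I want a genuine SD2P on a dual. The cleanest route is to observe that $X$ having SD2P gives, via \cite{blr}, that $X^*$ has an octahedral norm, and then to recognize $Y = X^*/Y^{\circ}$ is false --- rather $Y^* = X^*/Y^{\circ}$. This is the crucial identification: the dual of the subspace $Y$ is the quotient $X^*/Y^{\circ}$. So the strategy is to show $Y^*$ inherits octahedrality (equivalently, via duality, that $Y$ has SD2P), by applying the quotient result for octahedrality that is dual to Theorem~\ref{teocociented2p}(iii).

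Here is the sharper plan. Apply Corollary~\ref{segundo} directly with $X$ replaced by $X^*$ and $Y$ replaced by the subspace $Y^{\circ} \subseteq X^*$. For this I need: (a) that $(X^*)^* = X^{**}$ has the SD2P, and (b) that $Y^{\circ}$ does not contain a copy of $\ell_1$. Hypothesis (b) is given. For (a), I use that $X$ has SD2P, which forces the norm of $X^*$ to be octahedral, hence $X^{**}$ has the $w^*$-SD2P; upgrading $w^*$-SD2P to SD2P on $X^{**}$ is the point where I must be careful --- in general $X^{**}$ inheriting the full SD2P from $X$ having SD2P should be checked, but since SD2P passes to the bidual for these big-slice properties (convex combinations of slices of $B_X$ embed isometrically as convex combinations of $w^*$-slices of $B_{X^{**}}$, and the diameter is preserved under the canonical embedding), this holds. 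Then Corollary~\ref{segundo}, applied to the space $X^*$ and its subspace $Y^{\circ}$, yields that $(Y^{\circ})^{\circ}$ has the SD2P. But $(Y^{\circ})^{\circ} = Y^{\circ \circ} = Y^{**}$ inside $X^{**}$, and $Y$ having SD2P follows once $Y^{**}$ has it, again by the bidual stability of SD2P (or by reading off the octahedrality of $Y^*$).

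The main obstacle will be the careful bookkeeping of the duality identifications and the bidual stability of the SD2P. Specifically, I must verify that $Y^* \cong X^*/Y^{\circ}$ isometrically, that $Y^{\circ \circ}$ is canonically $Y^{**}$, and --- the genuinely delicate step --- that the SD2P transfers faithfully between a space and its bidual in both directions needed here. The slice-diameter arguments are insensitive to passing to the bidual because a convex combination of $w^*$-slices of $B_{X^{**}}$ restricts to a convex combination of slices of $B_X$ of the same diameter, but I would want to state this stability as a small preliminary observation (or cite the octahedrality characterization of \cite{blr} to avoid it entirely, since octahedrality of $X^*$ is literally equivalent to SD2P of $X$, and likewise for $Y$). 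Assembling these identifications correctly, rather than any hard estimate, is where the care is required.
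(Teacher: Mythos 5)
There is a genuine gap, and it sits exactly where you yourself flagged the need for care. To apply Corollary~\ref{segundo} to the pair $(X^*,Y^{\circ})$ you need $(X^*)^*=X^{**}$ to have the SD2P. Your justification --- that convex combinations of slices of $B_X$ embed isometrically as convex combinations of $w^*$-slices of $B_{X^{**}}$ --- only controls slices of $B_{X^{**}}$ cut by functionals coming from $X^*$; that is, it proves the $w^*$-SD2P of $X^{**}$, which is nothing more than a restatement of the octahedrality of $X^*$, i.e.\ of the hypothesis that $X$ has the SD2P. The SD2P of $X^{**}$ requires that every convex combination of slices cut by \emph{arbitrary} functionals of $X^{***}$ have diameter $2$, and your embedding argument says nothing about those. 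This upgrade is precisely the hard point: it amounts to claiming that octahedrality of $X^*$ forces octahedrality of $X^{***}$, a statement that is established nowhere in the paper and does not follow from any soft argument (the principle of local reflexivity only preserves the action of functionals from $X^*$, so it gives the downward implication, not the upward one). Nor is there a structural shortcut via complementation: $X^*$ is norm-one complemented in $X^{***}$, but octahedrality does not pass from a norm-one complemented subspace to the superspace --- $\ell_1$ has octahedral norm while $\ell_1\oplus_\infty\mathbb{R}$ does not. The last step of your plan, descending from the SD2P of $Y^{\circ\circ}\cong Y^{**}$ to the SD2P of $Y$, is the unproblematic direction (Goldstine plus $w^*$-lower semicontinuity of the norm), but the ascent from $X$ to $X^{**}$ is an essential, unjustified leap.

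The paper's proof avoids biduals of $X$ altogether, and that is the repair you should make. Since $Y^{\circ}$ contains no isomorphic copy of $\ell_1$, the result of Ghoussoub--Godefroy--Maurey--Schachermayer quoted in the introduction gives that $(Y^{\circ})^*$ is strongly regular. Now $(Y^{\circ})^*=(X/Y)^{**}$ contains $X/Y$ isometrically as a closed subspace, and strong regularity is a property of \emph{all} closed bounded convex subsets of a space, hence it is inherited by closed subspaces; therefore $X/Y$ is strongly regular. Theorem~\ref{teocociented2p}, part iii), applied directly to $X$ and $Y$, then yields that $Y$ has the SD2P. The structural point worth internalizing is the asymmetry this exploits: strong regularity passes down to subspaces for free, whereas the SD2P and octahedrality emphatically do not; this is why the correct object to dualize is the quotient $X/Y$ (whose dual is $Y^{\circ}$), not the ambient space $X$, and why your bootstrapping of Corollary~\ref{segundo} one duality level up cannot be made to work without first settling a bidual-stability question that the paper never needs.
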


\begin{proof} As   $Y^{\circ}$ does not contain isomorphic copies of $\ell_1$, we deduce that $(Y^{\circ})^*$ is strongly regular and so $X/Y$ is also strongly regular as a subspace of  $(Y^{\circ})^*$. Now Theorem \ref{teocociented2p} applies.\end{proof}

Note that , taking into account the duality between SD2P and octahedrality, we deduce from the above corollary that the norm of $Y^*$ is octahedral whenever  the norm of $X^*$ is octahedral and $Y^{\circ}$ does not contain isomorphic copies of $\ell_1$.

 \begin{corollary} Let $X$ be a Banach space with  the SD2P and $Y$ a subspace of $X$. If $X/Y$ does not contain $\ell_1^n$ uniformly, then $Y$ has  the SD2P. \end{corollary}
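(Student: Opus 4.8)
The plan is to deduce this corollary directly from Theorem~\ref{teocociented2p}, part iii), by verifying that the hypothesis on the quotient $X/Y$ is met. Recall that Theorem~\ref{teocociented2p}(iii) asserts that whenever $X$ has the SD2P and $X/Y$ is strongly regular, then $Y$ inherits the SD2P. So the entire task reduces to checking that the assumption \emph{$X/Y$ does not contain $\ell_1^n$ uniformly} forces $X/Y$ to be strongly regular.

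First I would invoke the fact recalled in the introduction, namely \cite[Proposition 2.14]{chacho}, which states that any Banach space not containing $\ell_1^n$ uniformly (a $K$-convex space) is strongly regular. Applying this with the space $X/Y$ in place of the ambient space gives immediately that $X/Y$ is strongly regular. Since the corollary hypothesizes precisely that $X/Y$ does not contain $\ell_1^n$ uniformly, this step is a one-line citation and requires no computation.

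With strong regularity of $X/Y$ established, I would then apply Theorem~\ref{teocociented2p}(iii): since $X$ has the SD2P and $X/Y$ is strongly regular, the subspace $Y$ has the SD2P, which is exactly the conclusion sought. There is essentially no obstacle here: the proof is a direct chaining of two previously stated results. If anything merits a remark, it is only that \emph{not containing $\ell_1^n$ uniformly} is a strictly stronger condition than strong regularity, so this corollary is genuinely weaker than Theorem~\ref{teocociented2p}(iii) but is stated separately because the $K$-convexity hypothesis is more readily checkable and more classical in flavour. The whole argument can be written in two or three sentences.

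\begin{proof}
Since $X/Y$ does not contain $\ell_1^n$ uniformly, it is a $K$-convex Banach space and hence strongly regular by \cite[Proposition 2.14]{chacho}. As $X$ has the SD2P and $X/Y$ is strongly regular, Theorem~\ref{teocociented2p}, part iii), yields that $Y$ has the SD2P.
\end{proof}
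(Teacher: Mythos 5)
Your proof is correct and is exactly the argument the paper intends: the corollary is stated there without proof precisely because it follows by chaining \cite[Proposition 2.14]{chacho} (spaces not containing $\ell_1^n$ uniformly are strongly regular, as recalled in the introduction) with Theorem~\ref{teocociented2p}, part iii). Nothing is missing.
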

 
 Note that , taking into account the duality between SD2P and octahedrality, we deduce from the above corollary that the norm of $Y^*$ is octahedral whenever  the norm of $X^*$ is octahedral and $Y^{\circ}$ does not contain $\ell_1^n$ uniformly.

It is known that the previous properties are characterized by the dual version of diameter two property. Indeed, given $X$ a Banach space, it is known that $X$ has octahedral norm if, and only if, $X^*$ has the $w^*$-strong diameter two property \cite[Theorem 2.1]{blr}. Moreover, it is known that $X$ has $2-$rough (respectively weakly octahedral) norm if, and only if, $X^*$ has the $w^*$-slice diameter two property (respectively the $w^*$-diameter two property), see \cite{dgz} and \cite[Theorem 3.1]{hlp}.

Now we will see that given $X$ a Banach space and $Y\subseteq X$ a subspace, if $X$ has octahedral, weakly octahedral or $2-$rough norm and $Y$ satisfies some additional condition, we can ensure that $X/Y$ has octahedral norm, weakly octahedral or $2-$rough, respectively.   

\begin{theorem}
Let $X$ be a Banach space and  $Y$ a  subspace of $X$.

\begin{enumerate}
\item[i)] If $X$ has a $2-$rough norm, $Y$ is a finite-dimensional subspace of $X$  and $\pi :X\rightarrow Y$ a norm one projection, then $X/Y$ has a $2-$rough norm. 

\item[ii)] If $X$ has weakly octahedral norm and  $Y$ is finite-dimensional, then $X/Y$ has weakly octahedral norm.

\item[iii)] If $X$ has octahedral norm and $Y$ is reflexive then $X/Y$ has octahedral norm.
\end{enumerate}
\end{theorem}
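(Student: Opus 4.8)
The plan is to reduce the whole statement to Theorem \ref{teocociented2p} by dualising. Recall the three equivalences recorded above: $X$ has octahedral (resp.\ weakly octahedral, $2$-rough) norm if and only if $X^*$ has the $w^*$-SD2P (resp.\ $w^*$-D2P, $w^*$-slice-D2P). Applying the same equivalences to the space $X/Y$ shows that $X/Y$ has octahedral (resp.\ weakly octahedral, $2$-rough) norm if and only if $(X/Y)^*$ has the $w^*$-SD2P (resp.\ $w^*$-D2P, $w^*$-slice-D2P); and since $(X/Y)^*$ is canonically and isometrically the annihilator $Y^{\circ}\subseteq X^*$, each assertion becomes the statement that the ambient dual $X^*$ has a $w^*$-diameter two property which must be transferred to the $w^*$-closed subspace $Y^{\circ}$. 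First I would record the topological fact that the $w^*$-topology of $Y^{\circ}$ as the dual of $X/Y$, namely $\sigma(Y^{\circ},X/Y)$, is exactly the restriction to $Y^{\circ}$ of $\sigma(X^*,X)$; this is what allows the $w^*$-slices of $B_{Y^{\circ}}$ to be compared with $w^*$-slices of $B_{X^*}$.

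With this dictionary the three parts become the $w^*$-analogues of the three parts of Theorem \ref{teocociented2p}, now with ambient space $X^*$, subspace $Y^{\circ}$ and quotient $X^*/Y^{\circ}=Y^*$. I would therefore run the same arguments as in Theorem \ref{teocociented2p}, replacing the weak topology by the weak-star topology throughout and using the Bourgain lemma in its $w^*$ form (every nonempty relatively $w^*$-open subset of $B_{X^*}$ contains a convex combination of $w^*$-slices). The hypotheses are precisely what make the quotient side behave: in ii) the quotient $X^*/Y^{\circ}=Y^*$ is finite dimensional because $Y$ is; in iii) the quotient $Y^*$ is strongly regular because the reflexive $Y$ contains no copy of $\ell_1$, so $Y^*$ is strongly regular (indeed has the RNP); and in i) the norm-one projection $\pi\colon X\to Y$ dualises to the norm-one projection $\pi^*\colon X^*\to X^*$, whose range $(\ker\pi)^{\circ}\cong Y^*$ is finite dimensional and whose kernel is exactly $Y^{\circ}$.

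Concretely, for iii) I would start from a convex combination $\sum_i\lambda_i S(B_{Y^{\circ}},x_i,\varepsilon)$ of $w^*$-slices, choosing each representative $x_i\in X$ norm-minimal in its coset, which is possible since the reflexive (hence proximinal) $Y$ provides a best approximation, so that $S(B_{Y^{\circ}},x_i,\varepsilon)=S(B_{X^*},x_i,\varepsilon)\cap Y^{\circ}$. Pushing the ambient $w^*$-slices through the restriction map $r=j^*\colon X^*\to Y^*$ gives convex subsets $A_i$ of $B_{Y^*}$ containing $0$; strong regularity of $Y^*$ together with \cite[Proposition III.6]{ggms} yields strongly regular points $a_i$ of $\overline{A_i}$ of arbitrarily small norm and convex combinations of slices of $B_{Y^*}$, meeting $\overline{A_i}$, of arbitrarily small diameter. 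Here reflexivity of $Y$ is used a second, decisive time: because $Y^{**}=Y$, every such slice of $B_{Y^*}$ is a $w^*$-slice, so its preimage under the $w^*$-$w^*$-continuous map $r$ is relatively $w^*$-open in $B_{X^*}$. Pulling back and combining with the $S(B_{X^*},x_i,\varepsilon)$ produces a convex combination of relatively $w^*$-open subsets of $B_{X^*}$; the $w^*$-Bourgain lemma and the $w^*$-SD2P of $X^*$ give two points at distance $>2-\delta$, and the small diameters on the $Y^*$ side let me replace them by elements of $B_{Y^{\circ}}$ lying in slightly enlarged $w^*$-slices, exactly as in the proof of Theorem \ref{teocociented2p} iii). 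Part ii) is the same argument, with strong regularity replaced by the trivialities available for the finite-dimensional quotient $Y^*$.

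The main obstacle is part i). Unlike ii) and iii), where the quotient $Y^*$ carries a weak topology that coincides with, or pulls back to, the relevant $w^*$-topology, in i) I must transfer a property that controls only individual $w^*$-slices, and the natural projection onto $Y^{\circ}$, namely $\mathrm{id}-\pi^*$, need not be norm one. The point to get right is that it is $\pi^*$, not $\mathrm{id}-\pi^*$, that is the norm-one projection, with finite-dimensional range and kernel $Y^{\circ}$; I would invoke the $w^*$-slice comparison underlying \cite[Theorem 5.3]{scsewe} in this dual configuration to conclude that the $w^*$-slices of $B_{Y^{\circ}}$ are at least as large as those of $B_{X^*}$, hence of diameter two. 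Verifying that the hypotheses of that comparison genuinely hold for the finite-codimensional, $w^*$-closed subspace $Y^{\circ}$, rather than for the image of a norm-one projection, is the delicate step on which part i) hinges.
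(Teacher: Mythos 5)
Your overall strategy---dualising each part to the statement that $Y^{\circ}=(X/Y)^*$ inherits the corresponding $w^*$-diameter two property from $X^*$, identifying $\sigma(Y^{\circ},X/Y)$ with the restriction of $\sigma(X^*,X)$, and running the $w^*$-analogues of Theorem \ref{teocociented2p} with ambient space $X^*$, subspace $Y^{\circ}$ and quotient $X^*/Y^{\circ}=Y^*$---is exactly the paper's. Your parts ii) and iii) are correct and structurally identical to the paper's proofs. The only divergence in iii) is the tool used on the quotient side: you invoke strong regularity of $Y^*$ (via \cite[Corollary 6.18]{ggms} and \cite[Proposition III.6]{ggms}, mirroring part iii) of Theorem \ref{teocociented2p}), while the paper invokes Phelps's theorem and strongly exposed points, available because the reflexive $Y^*$ has the RNP; both work, and in both cases reflexivity of $Y$ is what makes the relevant slices of $B_{Y^*}$ into $w^*$-slices whose pullbacks are relatively $w^*$-open in $B_{X^*}$. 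Your insistence that each representative $x_i$ be chosen norm-minimal in its coset (possible by proximinality of the reflexive $Y$) actually fills a gap the paper passes over in silence: without it one can have $\sup_{B_{Y^{\circ}}}x_i=\Vert x_i+Y\Vert$ much smaller than $\Vert x_i\Vert$, so the ambient slice $S(B_{X^*},x_i,\alpha)$ may miss $Y^{\circ}$ entirely and $\overline{A_i}$ need not contain points of small norm.

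Part i) is where your proposal has a genuine gap, and it sits exactly where you flagged it. As you correctly compute, $(i\circ\pi)^*=\pi^*\circ i^*$ is a norm-one projection on $X^*$ with range $(\ker\pi)^{\circ}\cong Y^*$ (finite-dimensional) and kernel $Y^{\circ}$; hence the projection of $X^*$ onto $Y^{\circ}$ along $(\ker\pi)^{\circ}$ is $\mathrm{id}-\pi^*\circ i^*$, whose norm equals $\Vert \mathrm{id}_X-i\pi\Vert$ and need not be one. The $w^*$-version of Theorem \ref{teocociented2p} i), i.e.\ of the comparison in \cite[Theorem 5.3]{scsewe}, requires a norm-one projection onto the subspace that is to inherit the property (norm one is what yields $q(B_{X^*})=B_{Y^{\circ}}$), so ``invoking the $w^*$-slice comparison in this dual configuration'' is not an available step but precisely the missing argument, as you yourself concede. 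For comparison, the paper's own proof of i) asserts that $p=\pi^*\circ i^*$ is ``the desired projection'' from $X^*$ onto $Y^{\circ}$ with kernel $Z^{\circ}$; by your computation its range and kernel are interchanged, so the published argument stumbles at the very same point you identified. To close part i) along these lines one needs either the additional hypothesis $\Vert \mathrm{id}_X-i\pi\Vert=1$ (a norm-one projection onto a complement of $Y$, whose adjoint is then a $w^*$-continuous norm-one projection onto $Y^{\circ}$ with finite-dimensional kernel), or a direct $w^*$-adaptation of the Schachermayer--Sersouri--Werner comparison that works with the norm-one projection onto the finite-dimensional summand $Z^{\circ}$; neither your proposal nor the paper supplies such an argument.
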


\begin{proof}

{\bf i)} From the duality between $2-$roughness and $w^*$-slice-D2P, we know that $X^*$ has $w^*$-slice-D2P and we have to prove  that $Y^{\circ}$ has $w^*$-slice-D2P. If we define $Z=Ker(\pi)$, we get that $Z^{\circ}$ is finite-dimensional, since $Y$ it is.  From the $w^*$ version of part i) in  Theorem \ref{teocociented2p} it is enough to see that there is a norm one projection from $X^*$ onto $Y^{\circ}$ with $Ker(p)=Z^{\circ}$. Consider $i$ the inclusion of $Y$ in $X$. Doing $p=\pi^*\circ i^*$ we obtain the desired projection.

{\bf ii)]} In order to prove that $X/Y$ has weakly octahedral norm, we have to check that $(X/Y)^*\cong Y^\circ$ has the $w^*$-diameter two property.

So consider

$$W:=\{y^*\in Y^\circ\ /\ \vert (y^*-y_0^*)(y_i)\vert<\varepsilon_i\ \forall i\in\{1,\ldots, n\}\}, $$

for $n\in\mathbb N, \varepsilon_i\in\mathbb R^+, y_i\in Y$ for each $i\in\{1,\ldots,n\}$ and $y_0^*\in Y^\circ$ such that

$$W\cap B_{Y^\circ}\neq \emptyset.$$

Let's prove that $W\cap B_{Y^\circ}$ has diameter 2. To this aim pick  $\delta\in\mathbb R^+$.

Define

$$U:=\{x^*\in X^*\ /\ \vert (y^*-y_0^*)(y_i)\vert<\varepsilon_i\ \forall i\in\{1,\ldots, n\}\},$$

which is a non-empty relatively weakly$^*$ open of $X^*$ satisfying that $U\cap B_{X^*}\neq \emptyset$.

Let $p:X^*\longrightarrow X^*/Y^\circ\cong Y^*$ be the quotient map, which is a  $w^*-w^*$ open map. Then $p(U)$ is a weakly$^*$ open set of $X^*/Y^\circ$. Moreover

$$\emptyset\neq p(U\cap B_{X^*})\subseteq p(U)\cap p(B_{X^*})=p(U)\cap B_{X^*/Y^\circ}.$$

If we define $A:=p(U)\cap B_{X^*/Y^\circ}$, then we have that $A$ is a relatively weak-star open and  convex subset of $B_{X^*/Y^\circ}$ which contains to zero.

As $Y$ is finite-dimensional, we can find $V$ a weak-star open set of $X^*/Y^\circ$, in fact a ball centered at zero, such that $V\subset A$ and that

\begin{equation}\label{diamabicocie3}
diam(V)=diam(V\cap p(U)\cap B_{X^*/Y^\circ})<\frac{\delta}{8}.
\end{equation}

As $V\subset A$ then $B:=p^{-1}(V)\cap U\cap B_X\neq \emptyset$. Hence $B$ is a non-empty relatively weak-star open subset of  $B_{X^*}$. Since $X$ has
weakly octahedral norm, we deduce that $B$ has diameter 2. Hence we can find $x^*,y^*\in B$ satisfying

\begin{equation}\label{estimabiespa3}
\Vert x^*-y^*\Vert>2-\frac{\delta}{16}.
\end{equation}

Now $x^*\in B$ implies that $p(x^*)\in V= V\cap P(U)\cap B_{X^*/Y^\circ}$. In view of (\ref{diamabicocie3}) it follows that 

$$\Vert p(x^*)\Vert\leq   diam(V\cap p(U)\cap B_{X^*/Y^\circ})<\frac{\delta}{8}.$$

Hence we can find $u^*\in B_{Y^\circ}$ such that

\begin{equation}\label{elemenpos3}
\Vert x^*-u^*\Vert<\frac{\delta}{8}.
\end{equation}

Keeping in mind (\ref{diamabicocie3}) and using a similar argument we can find $v^*\in B_Y$ such that

\begin{equation}\label{elemeneg3}
\Vert v^*-y^*\Vert<\frac{\delta}{8}.
\end{equation}

Note that given $i\in\{1,\ldots, n\}$, by (\ref{elemenpos3}) one has

$$\vert (u^*-y_0^*)(y_i)\vert\leq \vert (x^*-y_0^*)(y_i)\vert+\vert (x^*-u^*)(y_i)\vert\leq \Vert y_i\Vert\frac{\delta}{8}+\varepsilon_i$$

because $x^*\in U$. Thus, if we define

$$W_\delta:=\left\{y^*\in Y^\circ\ /\ \vert (y^*-y_0^*)(y_i)\vert<\varepsilon_i+\Vert y_i\Vert\frac{\delta}{8}\ \forall i\in\{1,\ldots, n\}\right\},$$

we have that $u^*,v^*\in W_\delta\cap B_{Y^\circ}$. On the other hand, in view of (\ref{estimabiespa3}),(\ref{elemenpos3}) and (\ref{elemeneg3}) one has

\begin{equation}\label{estimafinal3}
diam(W_\delta\cap B_{Y^\circ})\geq \Vert u^*-v^*\Vert\geq \Vert x^*-y^*\Vert-\Vert x^*-u^*\Vert-\Vert y^*-v^*\Vert\\
\geq\end{equation}
\begin{equation}
 2-\frac{\delta}{16}-\frac{\delta}{8}-\frac{\delta}{8}>2-\delta.
\end{equation}

As $\delta\in\mathbb R^+$ was arbitrary we deduce that $diam(W\cap B_{Y^\circ})=2$, as wanted.

{\bf iii)} Assume that $X$ has an octahedral norm and $Y$ is reflexive. We know that, in order to prove that $X/Y$ has octahedral norm, we have to prove that $(X/Y)^*=Y^\circ$ has the $w^*$-strong diameter two property.

To this aim consider $C:=\sum_{i=1}^n \lambda_i S(B_{Y^\circ},x_i,\alpha)$ a convex combination of slices in $B_{Y^\circ}$ and let prove that $diam(C)=2$. 

Pick an arbitrary $\delta\in\mathbb R^+$. Define $S_i:=S(B_{X^*},x_i,\alpha)$ for each $i\in\{1,\ldots, n\}$.

Let $\pi:X^*\longrightarrow X^*/Y^\circ=Y^*$ the quotient map and define $A_i:=\pi(S_i)$.

Now by Phelps's theorem, for each  $i\in\{1,\ldots, n\}$, there exists a convex combination of strongly exposed points of $\overline{A_i}$, say $\sum_{j=1}^{n_i}\mu_j^i a_j^i$, such that

\begin{equation}\label{normafuertexp2}
\left\Vert \sum_{j=1}^{n_i}\mu_j^i a_j^i\right\Vert<\frac{\delta}{16}.
\end{equation}

For every $i\in\{1,\ldots,n\}, j\in\{1,\ldots, n_i\}$ let $(a_j^i)^*\in S_{X/Y}$ a functional which strongly exposes to  $a_j^i$. Given $i\in\{1,\ldots, n\}, j\in\{1,\ldots, n_i\}$ consider $\eta_j^i\in\mathbb R^+$ satisfying

\begin{equation}\label{diaslicescociente2}
diam(S(B_{X^*/Y^\circ},(a_j^i)^*,\eta_j^i)\cap \overline{A_i})<\frac{\delta}{16}.
\end{equation}

Note that $S(B_{X^*/Y^\circ},(a_j^i)^*,\eta_j^i)$ can be seen as a $w^*$-slice because of reflexivity of $Y^*$.

Moreover it is clear that for $i\in\{1,\ldots, n\}$ and $j\in\{1,\ldots, n_i\}$ one has 

$$S(B_{X^*/Y^\circ},(a_j^i)^*,\eta_j^i)\cap A_i\neq \emptyset\Rightarrow S(B_{X^*},\pi^*((a_j^i)^*),\eta_j^i)\cap S(B_{X^*},x_i,\varepsilon)\neq \emptyset.$$

Now $\sum_{i=1}^n \lambda_i \sum_{j=1}^{n_i}\mu_j^i S(B_{X^*},\pi^*((a_j^i)^*),\eta_i)\cap S(B_{X^*},x_i,\varepsilon)$ is a convex combination of non-empty relatively weakly-star open subsets of $B_{X^*}$. Hence, again by Bourgain lemma, it has diameter two. Thus we can find, for each $i\in\{1,\ldots, n\}, j\in\{1,\ldots, n_i\}$, elements $x_j^i, z_j^i\in S(B_{X^*},\pi^*((a_j^i)^*),\eta_j^i)\cap S(B_{X^*},x_i,\varepsilon)$ such that

\begin{equation}\label{estimabinuevos}
\left\Vert \sum_{i=1}^n \lambda_i \sum_{j=1}^{n_i}\mu_j^i(x_j^i-z_j^i)\right\Vert>2-\frac{\delta}{16}.
\end{equation}

Now, following the proof of ii), for each $i\in\{1,\ldots, n\}$ we can find $y_i,y_i'\in S(B_{Y^\circ},x_i,\alpha)$ such that

\begin{equation}\label{elemcercanos}
\begin{array}{ccc}
\left\Vert y_i-\sum_{j=1}^{n_i}\mu_j^i x_j^i\right\Vert<\frac{\delta}{8} & \mbox{and} & \left\Vert y_i'-\sum_{j=1}^{n_i}\mu_j^i
z_j^i\right\Vert<\frac{\delta}{8}.
\end{array}
\end{equation}

So

\begin{equation}\label{perteneslice}
y_i,y_i'\in S\left(B_{Y^\circ}, x_i,\alpha+\frac{\delta}{8}\right),
\end{equation}

and

$$
diam\left(\sum_{i=1}^n \lambda_i S\left(B_Y,y_i^*,\varepsilon+\frac{\delta}{8}\right)\right)\geq \left\Vert \sum_{i=1}^n \lambda_i a_i-\sum_{i=1}^n \lambda_i b_i\right\Vert\geq$$

$$ \geq \left\Vert \sum_{i=1}^n \lambda_i \sum_{j=1}^{n_i}\mu_j^i x_j^i-\sum_{i=1}^n \lambda_i \sum_{j=1}^{n_i}\mu_j^i z_j^i\right\Vert-
\left\Vert a_i-\sum_{j=1}^{n_i}\mu_j^ix_j^i\right
\Vert-\left\Vert b_i-\sum_{j=1}^{n_i}\mu_j^iz_j^i\right
\Vert>$$ $$2-\frac{3\delta}{4}.$$

As $\delta\in\mathbb R^+$ was arbitrary we conclude that $diam(C)=2$.

Hence $X/Y$ has octahedral norm in view of arbitrariness of $C$, so we are done.\end{proof}

\end{document}